\newtheorem{theorem}{Theorem}
\newtheorem{corollary}[theorem]{Corollary}
\newtheorem{definition}[theorem]{Definition}
\newtheorem{remark}[theorem]{Remark}
\newenvironment{proof}[1][Proof]{\noindent\textbf{#1.} }{\ \rule{0.5em}{0.5em}}
\begin{document}

\begin{center}
{\large \textbf{Inclusion properties for bi-univalent functions of complex
order defined by combining of Faber polynomial expansions and Fibonacci
numbers}}\\[2mm]
\textbf{\c{S}ahsene Alt\i nkaya$^{1,\ast }$, Samaneh G. Hamidi$^{2}$, Jay M.
Jahangiri$^{3}$, Sibel Yal\c{c}\i n$^{1}$} \\[2mm]

$^{1}$Department of Mathematics,\\[0pt]
Bursa Uludag University, 16059 Bursa, Turkey\\[0pt]
\textbf{E-Mail: sahsenealtinkaya@gmail.com, syalcin@uludag.edu.tr}\\[0pt]
$^{2}$Department of Mathematics, Brigham Young University, \\[0pt]
Provo, UT 84602, USA\\[0pt]
\textbf{E-Mail: shamidi@rider.edu}\\[1mm]
$^{3}$Department of Mathematical Sciences, Kent State University, \\[0pt]
Burton, OH 44021-9500, USA\\[0pt]
\textbf{E-Mail: jjahangi@kent.edu}\\[1mm]

\begin{abstract}
In this present investigation, we introduce the new class $\mathfrak{R}%
_{\Sigma ,\gamma }^{\mu ,\rho }\left( \widetilde{\mathfrak{p}}\right) $ of
bi-univalent functions defined by using the Tremblay fractional derivative
operator. Additionally, we use the Faber polynomial expansions and Fibonacci
numbers to derive bounds for the general coefficient $\left\vert
a_{n}\right\vert $ of the bi-univalent function class.

Keywords: Bi-univalent functions, subordination, Faber polynomials,
Fibonacci numbers, Tremblay fractional derivative operator.

2010, Mathematics Subject Classification: 30C45, 33D15.
\end{abstract}

\end{center}

\section{Introduction, Definitions and Notations}

Let $%
\mathbb{C}
$ be the complex plane and $\mathbb{U}=\left\{ z:z\in 
\mathbb{C}
\text{ and }\left\vert z\right\vert <1\right\} $ be open unit disc in $%
\mathbb{C}
$. Further, let $\mathcal{A}$ represent the class of functions analytic in $%
\mathbb{U}$, satisfying the condition%
\begin{equation*}
f(0)=\ f^{\prime }(0)-1=0.
\end{equation*}%
Then each function $f$ in $\mathcal{A}$ has the following Taylor series
expansion%
\begin{equation}
f(z)=z+a_{2}z^{2}+a_{3}z^{3}+\cdots =z+\overset{\infty }{\underset{n=2}{\sum 
}}a_{n}z^{n}.  \label{eq1}
\end{equation}%
The class of this kind of functions is represented by $\mathcal{S}$.

With a view to reminding the rule of subordination for analytic functions,
let the functions $f,g$ be analytic in $\mathbb{U}$. A function $f$ is 
\textit{subordinate} to $g,$ indited as $f\prec g,$ if there exists a
Schwarz function 
\begin{equation*}
\mathbf{\varpi }(z)=\overset{\infty }{\underset{n=1}{\sum }}\mathfrak{c}%
_{n}z^{n}\ \ \left( \mathbf{\varpi }\left( 0\right) =0,\text{\ }\left\vert 
\mathbf{\varpi }\left( z\right) \right\vert <1\right) ,
\end{equation*}%
analytic in $\mathbb{U}$ such that 
\begin{equation*}
f\left( z\right) =g\left( \mathbf{\varpi }\left( z\right) \right) \ \ \ \
\left( z\in \mathbb{U}\right) .
\end{equation*}%
For the Schwarz function $\mathbf{\varpi }\left( z\right) $ we know that $%
\left\vert \mathfrak{c}_{n}\right\vert <1$ (see \cite{Duren 83}).

According to the \textit{Koebe-One Quarter Theorem}, every univalent
function $f\in \mathcal{A}$ has an inverse $f^{-1}$ satisfying $f^{-1}\left(
f\left( z\right) \right) =z~~\left( z\in \mathbb{U}\right) $ and $f\left(
f^{-1}\left( w\right) \right) =w~$ $\left( \left\vert w\right\vert
<r_{0}\left( f\right) ;~~r_{0}\left( f\right) \geq \frac{1}{4}\right) ,$
where%
\begin{equation}
\begin{array}{l}
g(w)=f^{-1}\left( w\right) =w~-a_{2}w^{2}+\left( 2a_{2}^{2}-a_{3}\right)
w^{3} \\ 
\\ 
\ \ \ \ \ \ \ \ \ \ \ -\left( 5a_{2}^{3}-5a_{2}a_{3}+a_{4}\right)
w^{4}+\cdots .%
\end{array}
\label{eq2}
\end{equation}%
A function $f\in \mathcal{A}$ is said to be bi-univalent in $\mathbb{U}$ if
both $f$ and $f^{-1}$ are univalent in $\mathbb{U}.~$Let $\Sigma $ denote
the class of bi-univalent functions in $\mathbb{U}$ given by (\ref{eq1}).
For a brief historical account and for several notable investigation of
functions in the class $\Sigma ,$ see the pioneering work on this subject by
Srivastava et al. \cite{Srivastava 2010} (see also \cite{Brannan and Clunie
80, Brannan and Taha 86, Lewin 67, Netanyahu 69}). The interest on estimates
for the first two coefficients $\left\vert a_{2}\right\vert $, $\left\vert
a_{3}\right\vert $ of the bi-univalent functions keep on by many researchers
(see, for example, \cite{AA, Hayami 2012, HO, Seker 2016, Srivastava 2013}).
However, in the literature, there are only a few works (by making use of the
Faber polynomial expansions) determining the general coefficient bounds $%
\left\vert a_{n}\right\vert $ for bi-univalent functions (\cite{AY, Hamidi
and Jahangiri 2014, Hamidi and Jahangiri 2016, S}). The coefficient estimate
problem for each of $\left\vert a_{n}\right\vert $ $\left( \ n\in 
\mathbb{N}
\backslash \left\{ 1,2\right\} ;\ \ 
\mathbb{N}
=\left\{ 1,2,3,...\right\} \right) $ is still an open problem.

Now, we recall to a notion of $q$-operators that play a major role in
Geometric Function Theory. The application of the $q$-calculus in the
context of Geometric Function Theory was actually provided and the basic (or 
$q$-) hypergeometric functions were first used in Geometric Function Theory
in a book chapter by Srivastava \cite{Srivastava1989}. For the convenience,
we provide some basic notation details of $q$-calculus which are used in
this paper.

\begin{definition}
(See \cite{SO}) For a function $f$ (analytic in a simply-connected region of 
$%
\mathbb{C}
$), the fractional derivative of order $\rho $ is stated by%
\begin{equation*}
D_{z}^{\rho }f(z)=\frac{1}{\Gamma (1-\rho )}\frac{d}{dz}\int\limits_{0}^{z}%
\frac{f(\xi )}{(z-\xi )^{\rho }}d\xi \ \ \ (0\leq \rho <1)
\end{equation*}%
and the fractional integral of order $\rho $ is stated by%
\begin{equation*}
I_{z}^{\rho }f(z)=\frac{1}{\Gamma (\rho )}\int\limits_{0}^{z}f(\xi )(z-\xi
)^{\rho -1}d\xi \ \ \ (\rho >0).
\end{equation*}
\end{definition}

\begin{definition}
(See \cite{S}) The Tremblay fractional derivative operator of the function $%
f $ is defined as%
\begin{equation}
I_{z}^{\mu ,\rho }f(z)=\frac{\Gamma (\rho )}{\Gamma (\mu )}z^{1-\rho
}D_{z}^{\mu -\rho }z^{\mu -1}f(z)\ \ \ (0<\mu \leq 1,0<\rho \leq 1,\mu \geq
\rho ,0<\mu -\rho <1).  \label{eq3}
\end{equation}
\end{definition}

From (\ref{eq3}), we deduce that%
\begin{equation*}
I_{z}^{\mu ,\rho }f(z)=\frac{\mu }{\rho }z+\overset{\infty }{\underset{n=2}{%
\sum }}\frac{\Gamma (\rho )\Gamma (n+\mu )}{\Gamma (\mu )\Gamma (n+\rho )}%
a_{n}z^{n}.~
\end{equation*}

In this paper, we study the new class $\mathfrak{R}_{\Sigma ,\gamma }^{\mu
,\rho }\left( \widetilde{\mathfrak{p}}\right) $ of bi-univalent functions
established by using the Tremblay fractional derivative operator. Further,
we use the Faber polynomial expansions and Fibonacci numbers to derive
bounds for the general coefficient $\left\vert a_{n}\right\vert $ of the
bi-univalent function class.

\section{Preliminaries}

By utilizing the Faber polynomial expansions for functions $f$ $\in \mathcal{%
A}$ of the form (\ref{eq1}), the coefficients of its inverse map $g=f$ $^{-1}
$ may be stated by  \cite{Airault and Bouali 2006, Airault and Ren 2002}:

\begin{equation*}
g\left( w\right) =f^{-1}\left( w\right) =w+\overset{\infty }{\underset{n=2}{%
\sum }}\frac{1}{n}K_{n-1}^{-n}\left( a_{2},a_{3},...\right) w^{n},
\end{equation*}%
where

\begin{eqnarray*}
K_{n-1}^{-n} &=&\frac{\left( -n\right) !}{\left( -2n+1\right) !\left(
n-1\right) !}a_{2}^{n-1}~+\frac{\left( -n\right) !}{\left[ 2\left(
-n+1\right) \right] !\left( n-3\right) !}a_{2}^{n-3}a_{3}~ \\
&&+~\frac{\left( -n\right) !}{\left( -2n+3\right) !\left( n-4\right) !}%
a_{2}^{n-4}a_{4}~ \\
&&+\frac{\left( -n\right) !}{\left[ 2\left( -n+2\right) \right] !\left(
n-5\right) !}a_{2}^{n-5}\left[ a_{5}+\left( -n+2\right) a_{3}^{2}\right] \\
&&+\frac{\left( -n\right) !}{\left( -2n+5\right) !\left( n-6\right) !}%
a_{2}^{n-6}\left[ a_{6}+\left( -2n+5\right) a_{3}a_{4}\right] \\
&&+\overset{}{\underset{j\geq 7}{\sum }}a_{2}^{n-j}V_{j},
\end{eqnarray*}%
such that $V_{j}$ $\left( 7\leq j\leq n\right) $ is a homogeneous polynomial
in the variables $a_{2},a_{3},...,a_{n}$. In the following, the first three
terms of $K_{n-1}^{-n}$ are stated by

\begin{eqnarray*}
\frac{1}{2}K_{1}^{-2} &=&-a_{2}, \\
\frac{1}{3}K_{2}^{-3} &=&2a_{2}^{2}-a_{3}, \\
\frac{1}{4}K_{3}^{-4} &=&-\left( 5a_{2}^{3}-5a_{2}a_{3}+a_{4}\right) .
\end{eqnarray*}%
In general, the expansion of $K_{n}^{p}$ $(p\in 
\mathbb{Z}
=\left\{ 0,\pm 1,\pm 2,\ldots \right\} )$ is stated by

\begin{equation*}
K_{n}^{p}=pa_{n}+\frac{p\left( p-1\right) }{2}\mathcal{G}_{n}^{2}+\frac{p!}{%
\left( p-3\right) !3!}\mathcal{G}_{n}^{3}+...+\frac{p!}{\left( p-n\right) !n!%
}\mathcal{G}_{n}^{n},
\end{equation*}%
where $\mathcal{G}_{n}^{p}=$ $\mathcal{G}_{n}^{p}\left(
a_{1},a_{2},...\right) $ and by \cite{Airault 2007},

\begin{equation*}
\mathcal{G}_{n}^{m}\left( a_{1},a_{2},...,a_{n}\right) =\overset{\infty }{%
\underset{n=1}{\sum }}\frac{m!\left( a_{1}\right) ^{\delta _{1}}...\left(
a_{n}\right) ^{\delta _{n}}}{\delta _{1}!...\delta _{n}!},
\end{equation*}%
while $a_{1}=1$, the sum is taken over all nonnegative integers $\delta
_{1},...,\delta _{n}$ satisfying%
\begin{eqnarray*}
\delta _{1}+\delta _{2}+~...~+\delta _{n} &=&m, \\
\delta _{1}+2\delta _{2}+~...~+n\delta _{n} &=&n.
\end{eqnarray*}%
The first and the last polynomials are%
\begin{equation*}
\mathcal{G}_{n}^{1}=a_{n}\ \ \ \ \ \ \ \ \mathcal{G}_{n}^{n}=a_{1}^{n}.
\end{equation*}%
For two analytic functions $\mathfrak{u}\left( z\right) $, $\mathfrak{v}%
\left( w\right) $ $\left( \mathfrak{u}\left( 0\right) =\mathfrak{v}\left(
0\right) =0,\ \left\vert \mathfrak{u}\left( z\right) \right\vert <1,\
\left\vert \mathfrak{v}\left( w\right) \right\vert <1\right) ,\ $suppose that%
\begin{equation*}
\begin{array}{l}
\mathfrak{u}\left( z\right) =\sum_{n=1}^{\infty }t_{n}z^{n}\ \ \left(
\left\vert z\right\vert <1,\ z\in \mathbb{U}\right) \ \ \ , \\ 
\\ 
\mathfrak{v}\left( w\right) =\sum_{n=1}^{\infty }s_{n}w^{n}\ \ \left(
\left\vert w\right\vert <1,\ w\in \mathbb{U}\right) .%
\end{array}%
\end{equation*}%
It is well known that

\begin{equation}
\left\vert t_{1}\right\vert \leq 1,\ \ \left\vert t_{2}\right\vert \leq
1-\left\vert t_{1}\right\vert ^{2},\ \ \left\vert s_{1}\right\vert \leq 1,\
\ \left\vert s_{2}\right\vert \leq 1-\left\vert s_{1}\right\vert ^{2}.
\label{eq9}
\end{equation}

\begin{definition}
A function $f\in \Sigma $ is said to be in the class%
\begin{equation*}
\mathfrak{R}_{\Sigma ,\gamma }^{\mu ,\rho }\left( \widetilde{\mathfrak{p}}%
\right) \ \ \ (\gamma \in 
\mathbb{C}
\backslash \{0\},\ 0<\mu \leq 1,\ 0<\rho \leq 1,\ z,w\in \mathbb{U})
\end{equation*}%
if the following subordination relationships are satisfied:%
\begin{equation*}
\left[ 1+\frac{1}{\gamma }\left( \frac{\rho \left( I_{z}^{\mu ,\rho
}f(z)\right) ^{\prime }}{\mu }-1\right) \right] \prec \widetilde{\mathfrak{p}%
}\left( z\right) =\frac{1+\tau ^{2}z^{2}}{1-\tau z-\tau ^{2}z^{2}}
\end{equation*}%
and%
\begin{equation*}
\left[ 1+\frac{1}{\gamma }\left( \frac{\rho \left( I_{z}^{\mu ,\rho
}g(w)\right) ^{\prime }}{\mu }-1\right) \right] \prec \widetilde{\mathfrak{p}%
}\left( w\right) =\frac{1+\tau ^{2}w^{2}}{1-\tau w-\tau ^{2}w^{2}},
\end{equation*}%
where the function $g$ is given by (\ref{eq2}) and $\tau =\frac{1-\sqrt{5}}{2%
}\approx -0.618.$
\end{definition}

\begin{remark}
The function $\widetilde{\mathfrak{p}}\left( z\right) $ is not univalent in $%
\mathbb{U}$, but it is univalent in the disc $\left\vert z\right\vert <\frac{%
3-\sqrt{5}}{2}\approx -0.38$. For example, $\widetilde{\mathfrak{p}}\left(
0\right) =\widetilde{\mathfrak{p}}\left( -\frac{1}{2\tau }\right) $ and $%
\widetilde{\mathfrak{p}}\left( e^{\pm i\arccos (1/4)}\right) =\frac{\sqrt{5}%
}{5}$. Also, it can be written as%
\begin{equation*}
\frac{1}{\left\vert \tau \right\vert }=\frac{\left\vert \tau \right\vert }{%
1-\left\vert \tau \right\vert }
\end{equation*}%
which indicates that the number $\left\vert \tau \right\vert $ divides $%
\left[ 0,1\right] $ such that it fulfills the golden section (see for
details Dziok et al. \cite{D}).
\end{remark}

Additionally, Dziok et al. \cite{D} indicate a useful connection between the
function $\widetilde{\mathfrak{p}}\left( z\right) $ and the Fibonacci
numbers. Let $\left\{ \Lambda _{n}\right\} $ be the sequence of Fibonacci
numbers 
\begin{equation*}
\Lambda _{0}=0,\ \Lambda _{1}=1,\ \Lambda _{n+2}=\Lambda _{n}+\Lambda
_{n+1}\ (n\in 
\mathbb{N}
_{0}=\left\{ 0,1,2,\ldots \right\} ),
\end{equation*}%
then%
\begin{equation*}
\Lambda _{n}=\frac{(1-\tau )^{n}-\tau ^{n}}{\sqrt{5}},\ \ \tau =\frac{1-%
\sqrt{5}}{2}.
\end{equation*}%
If we set 
\begin{eqnarray*}
\widetilde{\mathfrak{p}}\left( z\right) &=&1+\overset{\infty }{\underset{n=1}%
{\sum }}\widetilde{\mathfrak{p}}_{n}z^{n}=1+(\Lambda _{0}+\Lambda _{2})\tau
z+(\Lambda _{1}+\Lambda _{3})\tau ^{2}z^{2} \\
&& \\
&&+\overset{\infty }{\underset{n=3}{\sum }}(\Lambda _{n-3}+\Lambda
_{n-2}+\Lambda _{n-1}+\Lambda _{n})\tau ^{n}z^{n},
\end{eqnarray*}%
then the coefficients $\widetilde{\mathfrak{p}}_{n}$ satisfy%
\begin{equation}
\widetilde{\mathfrak{p}}_{n}=\left\{ 
\begin{array}{ll}
\tau & \left( n=1\right) \\ 
&  \\ 
3\tau ^{2} & \left( n=2\right) \\ 
&  \\ 
\tau \widetilde{\mathfrak{p}}_{n-1}+\tau ^{2}\widetilde{\mathfrak{p}}_{n-2}
& \left( n=3,4,\ldots \right)%
\end{array}%
\right. .  \label{D}
\end{equation}

Specializing the parameters $\gamma ,\mu $ and $\rho $, we state the
following definitions.

\begin{definition}
For $\mu =\rho =1,$ a function $f\in \Sigma $ is said to be in the class $%
\mathfrak{R}_{\Sigma ,\gamma }\left( \widetilde{\mathfrak{p}}\right) \left(
\gamma \in 
\mathbb{C}
\backslash \{0\}\right) $ if it satisfies the following conditions
respectively:%
\begin{equation*}
\left[ 1+\frac{1}{\gamma }\left( f^{\prime }(z)-1\right) \right] \prec 
\widetilde{\mathfrak{p}}\left( z\right)
\end{equation*}%
and%
\begin{equation*}
\left[ 1+\frac{1}{\gamma }\left( g^{\prime }(w)-1\right) \right] \prec 
\widetilde{\mathfrak{p}}\left( w\right) ,
\end{equation*}%
where $g=f^{-1}.$
\end{definition}

\begin{definition}
For $\gamma =\mu =\rho =1,$ a function $f\in \Sigma $ is said to be in the
class $\mathfrak{R}_{\Sigma }\left( \widetilde{\mathfrak{p}}\right) $ if it
satisfies the following conditions respectively:%
\begin{equation*}
f^{\prime }(z)\prec \widetilde{\mathfrak{p}}\left( z\right)
\end{equation*}%
and%
\begin{equation*}
g^{\prime }(w)\prec \widetilde{\mathfrak{p}}\left( w\right) ,
\end{equation*}%
where $g=f^{-1}.$
\end{definition}

\section{Main Result and its consequences}

\begin{theorem}
For $\gamma \in 
\mathbb{C}
\backslash \{0\}$, let $f\in \mathfrak{R}_{\Sigma ,\gamma }^{\mu ,\rho
}\left( \widetilde{\mathfrak{p}}\right) $. If $a_{m}=0~\left( 2\leq m\leq
n-1\right) $, then 
\begin{equation*}
\left\vert a_{n}\right\vert \leq \frac{\left\vert \gamma \right\vert
\left\vert \tau \right\vert \Gamma (\mu +1)\Gamma (n+\rho )}{n\Gamma (\rho
+1)\Gamma (n+\mu )}\ \ \ (n\geq 3).
\end{equation*}
\end{theorem}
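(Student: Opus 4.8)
The strategy is the standard Faber-polynomial argument for bi-univalent classes, now combined with the recursion \eqref{D} for the Fibonacci coefficients $\widetilde{\mathfrak{p}}_n$. First I would write out the Faber polynomial expansion of $1+\frac{1}{\gamma}\bigl(\frac{\rho(I_z^{\mu,\rho}f(z))'}{\mu}-1\bigr)$ in powers of $z$. Using the series $I_z^{\mu,\rho}f(z)=\frac{\mu}{\rho}z+\sum_{n\ge2}\frac{\Gamma(\rho)\Gamma(n+\mu)}{\Gamma(\mu)\Gamma(n+\rho)}a_nz^n$, the coefficient of $z^{n-1}$ in that expression is $\frac{n}{\gamma}\cdot\frac{\Gamma(\rho+1)\Gamma(n+\mu)}{\Gamma(\mu+1)\Gamma(n+\rho)}a_n$ plus a Faber polynomial $K$ in the earlier coefficients $a_2,\dots,a_{n-1}$. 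By the hypothesis $a_m=0$ for $2\le m\le n-1$, that Faber polynomial vanishes, so the $z^{n-1}$ coefficient is exactly $\frac{n}{\gamma}\cdot\frac{\Gamma(\rho+1)\Gamma(n+\mu)}{\Gamma(\mu+1)\Gamma(n+\rho)}a_n$. The analogous computation for $g=f^{-1}$, using $g(w)=w+\sum_{n\ge2}\frac1nK_{n-1}^{-n}(a_2,\dots)w^n$ and again $a_m=0$ for $2\le m\le n-1$ (which forces $\frac1nK_{n-1}^{-n}=-a_n$ in that range), gives the $w^{n-1}$ coefficient as $-\frac{n}{\gamma}\cdot\frac{\Gamma(\rho+1)\Gamma(n+\mu)}{\Gamma(\mu+1)\Gamma(n+\rho)}a_n$.

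Next I would invoke the two subordinations. There exist Schwarz functions $\mathfrak{u}(z)=\sum t_nz^n$ and $\mathfrak{v}(w)=\sum s_nw^n$ with the left-hand sides equal to $\widetilde{\mathfrak{p}}(\mathfrak{u}(z))$ and $\widetilde{\mathfrak{p}}(\mathfrak{v}(w))$ respectively. Expanding $\widetilde{\mathfrak{p}}(\mathfrak{u}(z))=1+\widetilde{\mathfrak{p}}_1t_1z+(\widetilde{\mathfrak{p}}_1t_2+\widetilde{\mathfrak{p}}_2t_1^2)z^2+\cdots$ and matching the $z^{n-1}$ coefficient: since $a_m=0$ for $2\le m\le n-1$, the first $n-2$ coefficients $t_1,\dots,t_{n-2}$ of $\mathfrak u$ must all vanish, so the only surviving term on the right at order $z^{n-1}$ is $\widetilde{\mathfrak p}_1 t_{n-1}$. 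Hence
\[
\frac{n}{\gamma}\cdot\frac{\Gamma(\rho+1)\Gamma(n+\mu)}{\Gamma(\mu+1)\Gamma(n+\rho)}a_n=\widetilde{\mathfrak p}_1 t_{n-1}=\tau\, t_{n-1},
\]
and similarly $-\frac{n}{\gamma}\cdot\frac{\Gamma(\rho+1)\Gamma(n+\mu)}{\Gamma(\mu+1)\Gamma(n+\rho)}a_n=\tau\, s_{n-1}$. From either equation, $|a_n|\le \frac{|\gamma|\,|\tau|\,\Gamma(\mu+1)\Gamma(n+\rho)}{n\,\Gamma(\rho+1)\Gamma(n+\mu)}$ follows at once from $|t_{n-1}|\le1$ (equivalently $|s_{n-1}|\le1$), which is the bound on Schwarz coefficients recorded before \eqref{eq9}.

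The main technical obstacle is making rigorous the claim that the vanishing of $a_2,\dots,a_{n-1}$ forces $t_1=\cdots=t_{n-2}=0$ (and $s_1=\cdots=s_{n-2}=0$): one has to argue inductively on the order of the coefficient, showing that at each order $k<n-1$ the matched identity reads (coefficient of $z^k$ in the LHS, which is a Faber expression in $a_2,\dots,a_k$, hence $0$) $=\widetilde{\mathfrak p}_1 t_k+(\text{polynomial in }t_1,\dots,t_{k-1})$, and then using the already-established vanishing of $t_1,\dots,t_{k-1}$ together with $\widetilde{\mathfrak p}_1=\tau\ne0$ to conclude $t_k=0$. Once that is in place the rest is bookkeeping with Gamma functions. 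A secondary point worth stating explicitly is that the coefficient of $a_n$ on the operator side is $\frac{n\,\Gamma(\rho+1)\Gamma(n+\mu)}{\gamma\,\Gamma(\mu+1)\Gamma(n+\rho)}$, obtained by combining the $n$-fold Faber derivative factor with the operator weight $\frac{\Gamma(\rho)\Gamma(n+\mu)}{\Gamma(\mu)\Gamma(n+\rho)}$ and the normalization $\rho/\mu$; I would verify this once for general $n$ rather than term by term.
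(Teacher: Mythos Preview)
Your proposal is correct and follows essentially the same route as the paper's proof: expand both sides via Faber polynomials, use the hypothesis $a_m=0$ for $2\le m\le n-1$ to reduce the $(n-1)$st coefficient on each side to a single term, match to obtain $\frac{n\,\Gamma(\rho+1)\Gamma(n+\mu)}{\gamma\,\Gamma(\mu+1)\Gamma(n+\rho)}a_n=\widetilde{\mathfrak{p}}_1 t_{n-1}$ (and the companion equation with $s_{n-1}$), and conclude by $|t_{n-1}|\le 1$. Your inductive justification that $t_1=\cdots=t_{n-2}=0$ is in fact more explicit than what the paper writes; the paper simply asserts the matched identity after the vanishing hypothesis without spelling out that intermediate step.
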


\begin{proof}
Let $f$ be given by (\ref{eq1}). By the definition of subordination yields%
\begin{equation}
\left[ 1+\frac{1}{\gamma }\left( \frac{\rho \left( I_{z}^{\mu ,\rho
}f(z)\right) ^{\prime }}{\mu }-1\right) \right] =\widetilde{\mathfrak{p}}(%
\mathfrak{u}(z))  \label{eq16}
\end{equation}%
and%
\begin{equation}
\left[ 1+\frac{1}{\gamma }\left( \frac{\rho \left( I_{z}^{\mu ,\rho
}g(w)\right) ^{\prime }}{\mu }-1\right) \right] =\widetilde{\mathfrak{p}}(%
\mathfrak{v}(w)).  \label{eq17}
\end{equation}%
Now, an application of Faber polynomial expansion to the power series $%
\mathfrak{R}_{\Sigma ,\gamma }^{\mu ,\rho }\left( \widetilde{\mathfrak{p}}%
\right) $ (e.g. see \cite{Airault and Bouali 2006} or [\cite{Airault and Ren
2002}, equation (1.6)]) yields 
\begin{equation*}
1+\frac{1}{\gamma }\left( \frac{\rho \left( I_{z}^{\mu ,\rho }f(z)\right)
^{\prime }}{\mu }-1\right) =1+\frac{\Gamma (\rho +1)}{\gamma \Gamma (\mu +1)}%
\overset{\infty }{\underset{n=2}{\sum }}\mathcal{F}_{n-1}\left(
a_{2},a_{3},...,a_{n}\right) z^{n-1}
\end{equation*}%
where%
\begin{equation*}
\begin{array}{ll}
\mathcal{F}_{n-1}\left( a_{2},a_{3},...,a_{n}\right) z^{n-1} & =n\frac{%
\Gamma (n+\mu )}{\Gamma (n+\rho )} \\ 
&  \\ 
& \times \overset{\infty }{\underset{i_{1}+2i_{2}+\cdots +(n-1)i_{(n-1)}=n-1}%
{\sum }}\frac{\left( 1-\left( i_{1}+i_{2}+\cdots +i_{n-1}\right) \right) !%
\left[ \left( a_{2}\right) ^{i_{1}}\left( a_{3}\right) ^{i_{2}}...\left(
a_{n}\right) ^{i_{n-1}}\right] }{\left( i_{1}!\right) \left( i_{2}!\right)
...\left( i_{n-1}!\right) }%
\end{array}%
\end{equation*}%
\begin{equation*}
\end{equation*}%
In particular, the first two terms are, $\mathcal{F}_{1}=\frac{2(\mu +1)}{%
\gamma (\rho +1)}a_{2},\mathcal{F}_{1}=\frac{3(\mu +1)(\mu +2)}{\gamma (\rho
+1)(\rho +2)}a_{3}.$

By the same token, for its inverse map $g=f^{-1}$, it is seen that 
\begin{eqnarray*}
1+\frac{1}{\gamma }\left( \frac{\rho \left( I_{z}^{\mu ,\rho }g(w)\right)
^{\prime }}{\mu }-1\right) &=&1+\overset{\infty }{\underset{n=2}{\sum }}%
\frac{\Gamma (\rho +1)\Gamma (n+\mu )}{\Gamma (\mu +1)\Gamma (n+\rho )}\frac{%
n}{\gamma }\times \frac{1}{n}K_{n-1}^{-n}\left( a_{2},a_{3},...\right)
w^{n-1} \\
&& \\
&=&1+\frac{\Gamma (\rho +1)}{\gamma \Gamma (\mu +1)}\overset{\infty }{%
\underset{n=2}{\sum }}\mathcal{F}_{n-1}\left( b_{2},b_{3},...,b_{n}\right)
w^{n-1}.
\end{eqnarray*}%
Next, the equations (\ref{eq16}) and (\ref{eq17}) lead to%
\begin{eqnarray*}
\widetilde{\mathfrak{p}}\left( \mathfrak{u}\left( z\right) \right) &=&1+%
\widetilde{\mathfrak{p}}_{1}\mathfrak{u}(z)+\widetilde{\mathfrak{p}}_{2}(%
\mathfrak{u}(z))^{2}z^{2}+\cdots \\
&& \\
&=&1+\widetilde{\mathfrak{p}}_{1}t_{1}z+\left( \widetilde{\mathfrak{p}}%
_{1}t_{2}+\widetilde{\mathfrak{p}}_{2}t_{1}^{2}\right) z^{2}+\cdots \\
&& \\
&=&1+\underset{}{\underset{n=1}{\overset{\infty }{\sum }}}\underset{k=1}{%
\overset{n}{\sum }}\widetilde{\mathfrak{p}}_{k}\mathcal{G}_{n}^{k}\left(
t_{1},t_{2},...,t_{n}\right) z^{n},
\end{eqnarray*}%
and 
\begin{eqnarray*}
\widetilde{\mathfrak{p}}\left( \mathfrak{v}\left( w\right) \right) &=&1+%
\widetilde{\mathfrak{p}}_{1}\mathfrak{v}(w)+\widetilde{\mathfrak{p}}_{2}(%
\mathfrak{v}(w))^{2}z^{2}+\cdots \\
&& \\
&=&1+\widetilde{\mathfrak{p}}_{1}s_{1}w+\left( \widetilde{\mathfrak{p}}%
_{1}s_{2}+\widetilde{\mathfrak{p}}_{2}s_{1}^{2}\right) w^{2}+\cdots \\
&& \\
&=&1+\underset{}{\underset{n=1}{\overset{\infty }{\sum }}}\underset{k=1}{%
\overset{n}{\sum }}\widetilde{\mathfrak{p}}_{k}\mathcal{G}_{n}^{k}\left(
s_{1},s_{2},...,s_{n}\right) w^{n}.
\end{eqnarray*}%
Comparing the corresponding coefficients of (\ref{eq16}) and (\ref{eq17})
yields%
\begin{equation*}
\frac{\Gamma (\rho +1)\Gamma (n+\mu )}{\Gamma (\mu +1)\Gamma (n+\rho )}\frac{%
n}{\gamma }a_{n}=\widetilde{\mathfrak{p}}_{1}t_{n-1,}
\end{equation*}%
\begin{equation*}
\frac{\Gamma (\rho +1)\Gamma (n+\mu )}{\Gamma (\mu +1)\Gamma (n+\rho )}\frac{%
n}{\gamma }b_{n}=\widetilde{\mathfrak{p}}_{1}s_{n-1}.
\end{equation*}%
For $a_{m}=0\ \left( 2\leq m\leq n-1\right) ,$ we get $b_{n}=-a_{n}$ and so%
\begin{equation}
\frac{\Gamma (\rho +1)\Gamma (n+\mu )}{\Gamma (\mu +1)\Gamma (n+\rho )}\frac{%
n}{\gamma }a_{n}=\widetilde{\mathfrak{p}}_{1}t_{n-1}  \label{eq18}
\end{equation}%
and%
\begin{equation}
-\frac{\Gamma (\rho +1)\Gamma (n+\mu )}{\Gamma (\mu +1)\Gamma (n+\rho )}%
\frac{n}{\gamma }a_{n}=\widetilde{\mathfrak{p}}_{1}s_{n-1}.  \label{eq19}
\end{equation}%
Now taking the absolute values of either of the above two equations and from
(\ref{eq9}), we obtain%
\begin{equation*}
\left\vert a_{n}\right\vert \leq \frac{\left\vert \gamma \right\vert
\left\vert \tau \right\vert \Gamma (\mu +1)\Gamma (n+\rho )}{n\Gamma (\rho
+1)\Gamma (n+\mu )}.
\end{equation*}
\end{proof}

\begin{corollary}
For $\gamma \in 
\mathbb{C}
\backslash \{0\}$, suppose that $f\in \mathfrak{R}_{\Sigma ,\gamma }\left( 
\widetilde{\mathfrak{p}}\right) $. If $a_{m}=0~\left( 2\leq m\leq n-1\right) 
$, then 
\begin{equation*}
\left\vert a_{n}\right\vert \leq \frac{\left\vert \gamma \right\vert
\left\vert \tau \right\vert }{n}\ \ \ (n\geq 3).
\end{equation*}
\end{corollary}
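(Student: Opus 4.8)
The plan is to obtain this as a direct specialization of the preceding Theorem, with $\mu =\rho =1$; no new argument will be needed. The first step is to check that the class $\mathfrak{R}_{\Sigma ,\gamma }\left( \widetilde{\mathfrak{p}}\right) $ coincides with $\mathfrak{R}_{\Sigma ,\gamma }^{1,1}\left( \widetilde{\mathfrak{p}}\right) $. This follows from the series representation
\begin{equation*}
I_{z}^{\mu ,\rho }f(z)=\frac{\mu }{\rho }z+\overset{\infty }{\underset{n=2}{\sum }}\frac{\Gamma (\rho )\Gamma (n+\mu )}{\Gamma (\mu )\Gamma (n+\rho )}a_{n}z^{n},
\end{equation*}
which for $\mu =\rho =1$ reduces to $I_{z}^{1,1}f(z)=z+\sum_{n\geq 2}a_{n}z^{n}=f(z)$, so that $\frac{\rho \left( I_{z}^{\mu ,\rho }f(z)\right) ^{\prime }}{\mu }=f^{\prime }(z)$ in this case; the identical reduction holds with $g$ in place of $f$. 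Hence the two subordination conditions defining $\mathfrak{R}_{\Sigma ,\gamma }^{1,1}\left( \widetilde{\mathfrak{p}}\right) $ collapse to $\left[ 1+\frac{1}{\gamma }\left( f^{\prime }(z)-1\right) \right] \prec \widetilde{\mathfrak{p}}\left( z\right) $ and $\left[ 1+\frac{1}{\gamma }\left( g^{\prime }(w)-1\right) \right] \prec \widetilde{\mathfrak{p}}\left( w\right) $, which is exactly the defining requirement for membership in $\mathfrak{R}_{\Sigma ,\gamma }\left( \widetilde{\mathfrak{p}}\right) $.

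The second step is to apply the Theorem with $\mu =\rho =1$. Under the hypothesis $a_{m}=0$ $\left( 2\leq m\leq n-1\right) $, it yields, for $n\geq 3$,
\begin{equation*}
\left\vert a_{n}\right\vert \leq \frac{\left\vert \gamma \right\vert \left\vert \tau \right\vert \Gamma (2)\Gamma (n+1)}{n\,\Gamma (2)\Gamma (n+1)}=\frac{\left\vert \gamma \right\vert \left\vert \tau \right\vert }{n},
\end{equation*}
since $\Gamma (2)=1$ and the remaining Gamma factors cancel between numerator and denominator. This is precisely the asserted estimate.

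Because the corollary is a pure parameter specialization, there is essentially no obstacle; the only item worth a line of verification is the identity $I_{z}^{1,1}f=f$ and the consequent identification of the two function classes, after which the bound drops out of the Theorem. An alternative, equally valid route would be to rerun the proof of the Theorem verbatim with $\mu =\rho =1$ substituted throughout, but that is redundant.
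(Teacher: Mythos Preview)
Your proof is correct and matches the paper's approach: the corollary is stated immediately after the theorem with no separate proof, so it is understood to follow by the parameter specialization $\mu=\rho=1$, exactly as you carry out. Your explicit verification that $I_{z}^{1,1}f=f$ (hence $\mathfrak{R}_{\Sigma ,\gamma }\left( \widetilde{\mathfrak{p}}\right)=\mathfrak{R}_{\Sigma ,\gamma }^{1,1}\left( \widetilde{\mathfrak{p}}\right)$) and the cancellation of the Gamma factors make the deduction fully transparent.
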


\begin{corollary}
Suppose that $f\in \mathfrak{R}_{\Sigma }\left( \widetilde{\mathfrak{p}}%
\right) $. If $a_{m}=0~\left( 2\leq m\leq n-1\right) $, then 
\begin{equation*}
\left\vert a_{n}\right\vert \leq \frac{\left\vert \tau \right\vert }{n}\ \ \
(n\geq 3).
\end{equation*}
\end{corollary}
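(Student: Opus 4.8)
The plan is to reduce the general-coefficient bound to the very first Faber-polynomial coefficient identity, exactly as in the Hamidi--Jahangiri approach, exploiting the hypothesis $a_m=0$ for $2\le m\le n-1$. First I would write $f$ as in (\ref{eq1}) and set up the two subordination relations from Definition of the class $\mathfrak{R}_{\Sigma,\gamma}^{\mu,\rho}(\widetilde{\mathfrak{p}})$, introducing Schwarz functions $\mathfrak{u}(z)=\sum t_n z^n$ and $\mathfrak{v}(w)=\sum s_n w^n$ so that (\ref{eq16}) and (\ref{eq17}) hold. Then I would apply the Faber polynomial expansion to the analytic expression $1+\tfrac1\gamma\bigl(\tfrac{\rho(I_z^{\mu,\rho}f(z))'}{\mu}-1\bigr)$ to obtain its coefficients $\tfrac{\Gamma(\rho+1)}{\gamma\Gamma(\mu+1)}\mathcal{F}_{n-1}(a_2,\dots,a_n)$, and likewise for $g=f^{-1}$ using the inverse Faber polynomials $K_{n-1}^{-n}$; the key structural point is that $\mathcal{F}_{n-1}(a_2,\dots,a_n)$, under the vanishing hypothesis, collapses to a single linear term in $a_n$ with coefficient $n\,\Gamma(n+\mu)/\Gamma(n+\rho)$.

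Next I would expand the right-hand sides $\widetilde{\mathfrak{p}}(\mathfrak{u}(z))$ and $\widetilde{\mathfrak{p}}(\mathfrak{v}(w))$ as $1+\sum_{n\ge1}\sum_{k=1}^n \widetilde{\mathfrak{p}}_k\,\mathcal{G}_n^k(t_1,\dots,t_n)z^n$ (and analogously in $s$), using the Fibonacci-number coefficients $\widetilde{\mathfrak{p}}_n$ from (\ref{D}); in particular $\widetilde{\mathfrak{p}}_1=\tau$. Comparing the coefficient of $z^{n-1}$ on both sides of (\ref{eq16}) and of $w^{n-1}$ in (\ref{eq17}), and using that $a_2=\dots=a_{n-1}=0$ forces $b_n=-a_n$ (from (\ref{eq2}) / the inverse formula, since all the cross terms vanish), I get the two clean identities
\begin{equation*}
\frac{\Gamma(\rho+1)\Gamma(n+\mu)}{\Gamma(\mu+1)\Gamma(n+\rho)}\frac{n}{\gamma}a_n=\widetilde{\mathfrak{p}}_1 t_{n-1},\qquad
-\frac{\Gamma(\rho+1)\Gamma(n+\mu)}{\Gamma(\mu+1)\Gamma(n+\rho)}\frac{n}{\gamma}a_n=\widetilde{\mathfrak{p}}_1 s_{n-1}.
\end{equation*}
Taking absolute values of either identity and invoking $|t_{n-1}|\le 1$ (resp. $|s_{n-1}|\le1$) from (\ref{eq9}) together with $|\widetilde{\mathfrak{p}}_1|=|\tau|$ yields
\begin{equation*}
|a_n|\le\frac{|\gamma|\,|\tau|\,\Gamma(\mu+1)\Gamma(n+\rho)}{n\,\Gamma(\rho+1)\Gamma(n+\mu)}.
\end{equation*}

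The step I expect to be the main obstacle — really the only subtle point — is verifying carefully that under $a_m=0$ for $2\le m\le n-1$ the Faber polynomial $\mathcal{F}_{n-1}(a_2,\dots,a_n)$ (and the inverse polynomial $\tfrac1n K_{n-1}^{-n}$) genuinely reduce to a single monomial proportional to $a_n$, with no surviving contribution from lower-index products. This is a matter of inspecting the multi-index sum $\sum_{i_1+2i_2+\cdots+(n-1)i_{n-1}=n-1}$: any term other than the one with $i_{n-1}=1$ (all other $i_j=0$) must involve some $a_j$ with $2\le j\le n-1$, which is now zero, so it drops out — and simultaneously this gives $b_n=-a_n$ for the inverse map. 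Once that combinatorial reduction is in hand, the remaining argument is the routine coefficient comparison and modulus estimate sketched above. The specializations in Definitions for $\mathfrak{R}_{\Sigma,\gamma}(\widetilde{\mathfrak{p}})$ (set $\mu=\rho=1$) and $\mathfrak{R}_\Sigma(\widetilde{\mathfrak{p}})$ (set $\gamma=\mu=\rho=1$) then give the two corollaries immediately, since $\Gamma(2)/\Gamma(2)=1$ and $\Gamma(n+1)/\Gamma(n+1)=1$.
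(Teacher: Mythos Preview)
Your proposal is correct and follows essentially the same approach as the paper: set up the subordination identities (\ref{eq16})--(\ref{eq17}) via Schwarz functions, expand both sides via Faber polynomials, use the hypothesis $a_m=0$ for $2\le m\le n-1$ to collapse $\mathcal{F}_{n-1}$ to a single term and to force $b_n=-a_n$, and then take moduli using $|\widetilde{\mathfrak{p}}_1|=|\tau|$ and the Schwarz bound to obtain the general estimate, which specializes to the corollary upon setting $\gamma=\mu=\rho=1$. Your explicit justification of the combinatorial reduction of the multi-index sum is in fact more detailed than what the paper records.
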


\begin{theorem}
Let $f\in \mathfrak{R}_{\Sigma ,\gamma }^{\mu ,\rho }\left( \widetilde{%
\mathfrak{p}}\right) \ (\gamma \in 
\mathbb{C}
\backslash \{0\}).$Then%
\begin{eqnarray*}
\left\vert a_{2}\right\vert &\leq &\min \left\{ \dfrac{\left\vert \gamma
\right\vert \left\vert \tau \right\vert }{\sqrt{\left\vert \tfrac{3\gamma
(\mu +1)(\mu +2)}{(\rho +1)(\rho +2)}-\tfrac{12(\mu +1)^{2}}{(\rho +1)^{2}}%
\right\vert \left\vert \tau \right\vert +\tfrac{4(\mu +1)^{2}}{(\rho +1)^{2}}%
}},\right. \\
&& \\
&&\left. \left\vert \tau \right\vert \sqrt{\frac{\left\vert \gamma
\right\vert (\rho +1)(\rho +2)}{(\mu +1)(\mu +2)}}\right\}
\end{eqnarray*}%
and%
\begin{eqnarray*}
\left\vert a_{3}\right\vert &\leq &\min \left\{ \frac{\left\vert \gamma
\right\vert \tau ^{2}(\rho +1)(\rho +2)}{(\mu +1)(\mu +2)},\right. \\
&& \\
&&\left. \dfrac{\left\vert \tau \right\vert }{\frac{3(\mu +1)(\mu +2)}{%
\left\vert \gamma \right\vert (\rho +1)(\rho +2)}}\left[ 1+\frac{\left[ 
\frac{3(\mu +1)(\mu +2)\left\vert \gamma \right\vert \left\vert \tau
\right\vert }{(\rho +1)(\rho +2)}-\frac{4(\mu +1)^{2}}{(\rho +1)^{2}}\right] 
}{\left\vert \dfrac{3\gamma (\mu +1)(\mu +2)}{(\rho +1)(\rho +2)}-\dfrac{%
12(\mu +1)^{2}}{(\rho +1)^{2}}\right\vert \left\vert \tau \right\vert +%
\dfrac{4(\mu +1)^{2}}{(\rho +1)^{2}}}\right] \right\} .
\end{eqnarray*}
\end{theorem}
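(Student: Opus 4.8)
The plan is to extract the low-order coefficient relations from the subordination conditions (\ref{eq16}) and (\ref{eq17}), specialized to $n=2$ and $n=3$, and then combine them in two different ways to produce the two competing bounds in each $\min$. First I would record the series expansion
\[
1+\frac{1}{\gamma}\left(\frac{\rho\left(I_z^{\mu,\rho}f(z)\right)'}{\mu}-1\right)
=1+\frac{2(\mu+1)}{\gamma(\rho+1)}a_2 z+\frac{3(\mu+1)(\mu+2)}{\gamma(\rho+1)(\rho+2)}a_3 z^2+\cdots,
\]
and likewise for $g=f^{-1}$ using $b_2=-a_2$, $b_3=2a_2^2-a_3$. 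On the right-hand side I would use $\widetilde{\mathfrak{p}}(\mathfrak{u}(z))=1+\widetilde{\mathfrak{p}}_1 t_1 z+(\widetilde{\mathfrak{p}}_1 t_2+\widetilde{\mathfrak{p}}_2 t_1^2)z^2+\cdots$ together with $\widetilde{\mathfrak{p}}_1=\tau$, $\widetilde{\mathfrak{p}}_2=3\tau^2$ from (\ref{D}). Equating coefficients of $z$ and $z^2$ in (\ref{eq16}), and of $w$, $w^2$ in (\ref{eq17}), gives four equations in $a_2,a_3,t_1,t_2,s_1,s_2$.

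The routine-but-central algebra: from the two $z$/$w$ linear equations one gets $\frac{2(\mu+1)}{\gamma(\rho+1)}a_2=\tau t_1$ and $-\frac{2(\mu+1)}{\gamma(\rho+1)}a_2=\tau s_1$, so $t_1=-s_1$ and $|a_2|\le\frac{|\gamma|\,|\tau|(\rho+1)}{2(\mu+1)}$ directly from $|t_1|\le 1$. Adding the two quadratic equations eliminates $a_3$ (since $b_3=2a_2^2-a_3$ and the $a_2^2$ term combines with the $\widetilde{\mathfrak{p}}_2 t_1^2=\widetilde{\mathfrak{p}}_2 s_1^2$ contributions) to yield an expression of the shape $\bigl[\tfrac{3\gamma(\mu+1)(\mu+2)}{(\rho+1)(\rho+2)}-\tfrac{12(\mu+1)^2}{(\rho+1)^2}\bigr]\tfrac{a_2^2}{\gamma^2}=\tau(t_2+s_2)+3\tau^2(t_1^2+s_1^2)$; estimating the right side by (\ref{eq9}) as $|\tau|\bigl((1-|t_1|^2)+(1-|s_1|^2)\bigr)+6|\tau|^2|t_1|^2$ and substituting $|t_1|^2=\frac{4(\mu+1)^2}{|\gamma|^2(\rho+1)^2}|a_2|^2$ gives, after collecting the $|a_2|^2$ terms, the first bound $|a_2|\le\frac{|\gamma|\,|\tau|}{\sqrt{\bigl|\cdots\bigr|\,|\tau|+\tfrac{4(\mu+1)^2}{(\rho+1)^2}}}$. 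For $a_3$: one bound comes from subtracting the two quadratic equations (which gives $a_3$ in terms of $a_2^2$ and $t_2-s_2$, then using $|a_2|^2\le|\tau|^2\frac{|\gamma|(\rho+1)(\rho+2)}{(\mu+1)(\mu+2)}$ from the second $a_2$ estimate), and the other by writing $a_3$ from the quadratic $f$-equation alone, $\frac{3(\mu+1)(\mu+2)}{\gamma(\rho+1)(\rho+2)}a_3=\tau t_2+3\tau^2 t_1^2$, and inserting the sharpened $|a_2|$ (hence $|t_1|$) estimate into $|t_2|\le 1-|t_1|^2$.

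The main obstacle I anticipate is purely bookkeeping: keeping the $\gamma$, $\tau$, and Gamma-ratio factors aligned while forming the linear combinations, and correctly tracking the absolute-value signs in the coefficient $\bigl|\tfrac{3\gamma(\mu+1)(\mu+2)}{(\rho+1)(\rho+2)}-\tfrac{12(\mu+1)^2}{(\rho+1)^2}\bigr|$, which must be taken because $\gamma$ is complex. A secondary subtlety is justifying that the two derivations genuinely give the two members of each $\min$ — i.e. checking that the $\min$ is not vacuous — but since both are valid upper bounds obtained from the same system, their minimum is automatically an upper bound and no further comparison is needed. I would present the $a_2$ part first in full, then note that the $a_3$ estimates follow by the analogous manipulation, suppressing the identical elementary steps.
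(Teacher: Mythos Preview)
Your overall strategy coincides with the paper's: extract the four coefficient relations, add the quadratic ones to isolate $a_2^2$, subtract them to isolate $a_3-a_2^2$, and feed in the Schwarz bounds (\ref{eq9}). The $|a_2|$ part is essentially right, though you never actually derive the second member $|\tau|\sqrt{|\gamma|(\rho+1)(\rho+2)/((\mu+1)(\mu+2))}$; in the paper this comes directly from the summed equation $\tfrac{6(\mu+1)(\mu+2)}{\gamma(\rho+1)(\rho+2)}a_2^2=\widetilde{\mathfrak p}_1(t_2+s_2)+2\widetilde{\mathfrak p}_2 t_1^2$ by estimating the right side with $|t_1|\le 1$ and using $|\widetilde{\mathfrak p}_2|>|\widetilde{\mathfrak p}_1|$. (Your displayed ``summed'' equation is also inconsistent: you have moved the $t_1^2$ contribution to the left while retaining $3\tau^2(t_1^2+s_1^2)$ on the right.)

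The genuine gap is in your $a_3$ plan: neither of the two combinations you describe produces the bounds in the statement. Subtracting the quadratic equations and then inserting the second $|a_2|$ estimate yields $|a_3|\le\tfrac{|\gamma|(\rho+1)(\rho+2)}{(\mu+1)(\mu+2)}\bigl(|\tau|^2+\tfrac{|\tau|}{3}\bigr)$, strictly weaker than the theorem's first bound. And using the single $f$-equation together with the sharpened $|a_2|$ estimate puts $(3|\tau|-1)\cdot\tfrac{4(\mu+1)^2}{(\rho+1)^2}$ in the bracket's numerator, not the theorem's $\tfrac{3(\mu+1)(\mu+2)|\gamma||\tau|}{(\rho+1)(\rho+2)}-\tfrac{4(\mu+1)^2}{(\rho+1)^2}$. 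What the paper actually does is: for the bound $\tfrac{|\gamma|\tau^2(\rho+1)(\rho+2)}{(\mu+1)(\mu+2)}$, use the $f$-equation alone with the crude $|t_1|\le 1$ and $|\widetilde{\mathfrak p}_2|>|\widetilde{\mathfrak p}_1|$; for the other bound, start from the subtracted relation $a_3=a_2^2+\tfrac{\gamma(\rho+1)(\rho+2)}{6(\mu+1)(\mu+2)}\widetilde{\mathfrak p}_1(t_2-s_2)$, bound $|t_2|+|s_2|\le 2(1-|t_1|^2)$, replace $|t_1|^2$ via the linear relation $|t_1|^2=\tfrac{4(\mu+1)^2}{|\gamma|^2(\rho+1)^2|\tau|^2}|a_2|^2$, and only then insert the \emph{first} $|a_2|$ bound. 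So you have the right pieces but the wrong pairings; reassign which $|a_2|$ estimate and which $|t_1|$ bound feeds into which $a_3$ derivation.
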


\begin{proof}
Substituting $n$ by $2$ and $3$ in (\ref{eq18}) and (\ref{eq19}),
respectively, we find that%
\begin{equation}
\frac{2(\mu +1)}{\gamma (\rho +1)}a_{2}=\widetilde{\mathfrak{p}}_{1}t_{1},
\label{eq20}
\end{equation}%
\begin{equation}
\frac{3(\mu +1)(\mu +2)}{\gamma (\rho +1)(\rho +2)}a_{3}=\widetilde{%
\mathfrak{p}}_{1}t_{2}+\widetilde{\mathfrak{p}}_{2}t_{1}^{2},  \label{eq21}
\end{equation}%
\begin{equation}
-\frac{2(\mu +1)}{\gamma (\rho +1)}a_{2}=\widetilde{\mathfrak{p}}_{1}s_{1},
\label{eq22}
\end{equation}%
\begin{equation}
\frac{3(\mu +1)(\mu +2)}{\gamma (\rho +1)(\rho +2)}(2a_{2}^{2}-a_{3})=%
\widetilde{\mathfrak{p}}_{1}s_{2}+\widetilde{\mathfrak{p}}_{2}s_{1}^{2}.
\label{eq23}
\end{equation}%
Obviously, we obtain%
\begin{equation}
t_{1}=-s_{1}.  \label{eq24}
\end{equation}%
If we add the equation (\ref{eq23}) to (\ref{eq21}) and use (\ref{eq24}), we
get 
\begin{equation}
\frac{6(\mu +1)(\mu +2)}{\gamma (\rho +1)(\rho +2)}a_{2}^{2}=\widetilde{%
\mathfrak{p}}_{1}\left( t_{2}+s_{2}\right) +2\widetilde{\mathfrak{p}}%
_{2}t_{1}^{2}.  \label{eq25}
\end{equation}%
Using the value of $t_{1}^{2}$ from (\ref{eq20}), we get 
\begin{equation}
\left[ \frac{6(\mu +1)(\mu +2)}{\gamma (\rho +1)(\rho +2)}\widetilde{%
\mathfrak{p}}_{1}^{2}-\frac{8(\mu +1)^{2}}{\gamma ^{2}(\rho +1)^{2}}%
\widetilde{\mathfrak{p}}_{2}\right] a_{2}^{2}=\widetilde{\mathfrak{p}}%
_{1}^{3}\left( t_{2}+s_{2}\right) .  \label{eq26}
\end{equation}%
Combining (\ref{eq26}) and (\ref{eq9}), we obtain 
\begin{eqnarray*}
2\left\vert \frac{3(\mu +1)(\mu +2)}{\gamma (\rho +1)(\rho +2)}\widetilde{%
\mathfrak{p}}_{1}^{2}-\frac{4(\mu +1)^{2}}{\gamma ^{2}(\rho +1)^{2}}%
\widetilde{\mathfrak{p}}_{2}\right\vert \left\vert a_{2}\right\vert ^{2}
&\leq &\left\vert \widetilde{\mathfrak{p}}_{1}\right\vert ^{3}\left(
\left\vert t_{2}\right\vert +\left\vert s_{2}\right\vert \right) \\
&& \\
&\leq &2\left\vert \widetilde{\mathfrak{p}}_{1}\right\vert ^{3}\left(
1-\left\vert t_{1}\right\vert ^{2}\right) \\
&& \\
&=&2\left\vert \widetilde{\mathfrak{p}}_{1}\right\vert ^{3}-2\left\vert 
\widetilde{\mathfrak{p}}_{1}\right\vert ^{3}\left\vert t_{1}\right\vert ^{2}.
\end{eqnarray*}%
It follows from (\ref{eq20}) that%
\begin{equation}
\left\vert a_{2}\right\vert \leq \dfrac{\left\vert \gamma \right\vert
\left\vert \tau \right\vert }{\sqrt{\left\vert \dfrac{3\gamma (\mu +1)(\mu
+2)}{(\rho +1)(\rho +2)}-\dfrac{12(\mu +1)^{2}}{(\rho +1)^{2}}\right\vert
\left\vert \tau \right\vert +\dfrac{4(\mu +1)^{2}}{(\rho +1)^{2}}}}.
\label{eq28}
\end{equation}%
Additionally, by (\ref{eq9}) and (\ref{eq25}) 
\begin{eqnarray*}
\frac{6(\mu +1)(\mu +2)}{\left\vert \gamma \right\vert (\rho +1)(\rho +2)}%
\left\vert a_{2}\right\vert ^{2} &\leq &\left\vert \widetilde{\mathfrak{p}}%
_{1}\right\vert \left( \left\vert t_{2}\right\vert +\left\vert
s_{2}\right\vert \right) +2\left\vert \widetilde{\mathfrak{p}}%
_{2}\right\vert \left\vert t_{1}\right\vert ^{2} \\
&& \\
&\leq &2\left\vert \widetilde{\mathfrak{p}}_{1}\right\vert \left(
1-\left\vert t_{1}\right\vert ^{2}\right) +2\left\vert \widetilde{\mathfrak{p%
}}_{2}\right\vert \left\vert t_{1}\right\vert ^{2} \\
&& \\
&=&2\left\vert \widetilde{\mathfrak{p}}_{1}\right\vert +2\left\vert
t_{1}\right\vert ^{2}(\left\vert \widetilde{\mathfrak{p}}_{2}\right\vert
-\left\vert \widetilde{\mathfrak{p}}_{1}\right\vert ).
\end{eqnarray*}%
Since $\left\vert \widetilde{\mathfrak{p}}_{2}\right\vert >\left\vert 
\widetilde{\mathfrak{p}}_{1}\right\vert $, we get%
\begin{equation*}
\left\vert a_{2}\right\vert \leq \left\vert \tau \right\vert \sqrt{\frac{%
\left\vert \gamma \right\vert (\rho +1)(\rho +2)}{(\mu +1)(\mu +2)}}.
\end{equation*}%
Next, in order to derive the bounds on $\left\vert a_{3}\right\vert ,$ by
subtracting (\ref{eq23}) from (\ref{eq21}), we may obtain%
\begin{equation}
\frac{6(\mu +1)(\mu +2)}{\gamma (\rho +1)(\rho +2)}a_{3}=\frac{6(\mu +1)(\mu
+2)}{\gamma (\rho +1)(\rho +2)}a_{2}^{2}+\widetilde{\mathfrak{p}}_{1}\left(
t_{2}-s_{2}\right) .  \label{eq29}
\end{equation}%
Evidently, from (\ref{eq25}), we state that%
\begin{eqnarray*}
a_{3} &=&\frac{\widetilde{\mathfrak{p}}_{1}\left( t_{2}+s_{2}\right) +2%
\widetilde{\mathfrak{p}}_{2}t_{1}^{2}}{\frac{6(\mu +1)(\mu +2)}{\gamma (\rho
+1)(\rho +2)}}+\frac{\widetilde{\mathfrak{p}}_{1}\left( t_{2}-s_{2}\right) }{%
\frac{6(\mu +1)(\mu +2)}{\gamma (\rho +1)(\rho +2)}} \\
&& \\
&=&\frac{\widetilde{\mathfrak{p}}_{1}t_{2}+\widetilde{\mathfrak{p}}%
_{2}t_{1}^{2}}{\frac{3(\mu +1)(\mu +2)}{\gamma (\rho +1)(\rho +2)}}
\end{eqnarray*}%
and consequently%
\begin{eqnarray*}
\left\vert a_{3}\right\vert &\leq &\frac{\left\vert \widetilde{\mathfrak{p}}%
_{1}\right\vert \left\vert t_{2}\right\vert +\left\vert \widetilde{\mathfrak{%
p}}_{2}\right\vert \left\vert t_{1}\right\vert ^{2}}{\frac{3(\mu +1)(\mu +2)%
}{\left\vert \gamma \right\vert (\rho +1)(\rho +2)}} \\
&& \\
&\leq &\frac{\left\vert \widetilde{\mathfrak{p}}_{1}\right\vert \left(
1-\left\vert t_{1}\right\vert ^{2}\right) +\left\vert \widetilde{\mathfrak{p}%
}_{2}\right\vert \left\vert t_{1}\right\vert ^{2}}{\frac{3(\mu +1)(\mu +2)}{%
\left\vert \gamma \right\vert (\rho +1)(\rho +2)}} \\
&& \\
&=&\frac{\left\vert \widetilde{\mathfrak{p}}_{1}\right\vert +\left\vert
t_{1}\right\vert ^{2}(\left\vert \widetilde{\mathfrak{p}}_{2}\right\vert
-\left\vert \widetilde{\mathfrak{p}}_{1}\right\vert )}{\frac{3(\mu +1)(\mu
+2)}{\left\vert \gamma \right\vert (\rho +1)(\rho +2)}}.
\end{eqnarray*}%
Since $\left\vert \widetilde{\mathfrak{p}}_{2}\right\vert >\left\vert 
\widetilde{\mathfrak{p}}_{1}\right\vert $, we must write%
\begin{equation*}
\left\vert a_{3}\right\vert \leq \frac{\left\vert \gamma \right\vert \tau
^{2}(\rho +1)(\rho +2)}{(\mu +1)(\mu +2)}.
\end{equation*}%
On the other hand, by (\ref{eq9}) and (\ref{eq29}), we have 
\begin{eqnarray*}
\frac{6(\mu +1)(\mu +2)}{\left\vert \gamma \right\vert (\rho +1)(\rho +2)}%
\left\vert a_{3}\right\vert &\leq &\frac{6(\mu +1)(\mu +2)}{\left\vert
\gamma \right\vert (\rho +1)(\rho +2)}\left\vert a_{2}\right\vert
^{2}+\left\vert \widetilde{\mathfrak{p}}_{1}\right\vert \left( \left\vert
t_{2}\right\vert +\left\vert s_{2}\right\vert \right) \\
&& \\
&\leq &\frac{6(\mu +1)(\mu +2)}{\left\vert \gamma \right\vert (\rho +1)(\rho
+2)}\left\vert a_{2}\right\vert ^{2}+2\left\vert \widetilde{\mathfrak{p}}%
_{1}\right\vert \left( 1-\left\vert t_{1}\right\vert ^{2}\right) .
\end{eqnarray*}%
Then, with the help of (\ref{eq20}), we have%
\begin{equation*}
\frac{3(\mu +1)(\mu +2)}{\left\vert \gamma \right\vert (\rho +1)(\rho +2)}%
\left\vert a_{3}\right\vert \leq \left[ \frac{3(\mu +1)(\mu +2)}{\left\vert
\gamma \right\vert (\rho +1)(\rho +2)}-\frac{4(\mu +1)^{2}}{\left\vert
\gamma \right\vert ^{2}(\rho +1)^{2}\left\vert \widetilde{\mathfrak{p}}%
_{1}\right\vert }\right] \left\vert a_{2}\right\vert ^{2}+\left\vert 
\widetilde{\mathfrak{p}}_{1}\right\vert .
\end{equation*}%
By considering (\ref{eq28}), we deduce that%
\begin{equation*}
\left\vert a_{3}\right\vert \leq \dfrac{\left\vert \tau \right\vert }{\frac{%
3(\mu +1)(\mu +2)}{\left\vert \gamma \right\vert (\rho +1)(\rho +2)}}\left\{
1+\frac{\left[ \frac{3(\mu +1)(\mu +2)\left\vert \gamma \right\vert
\left\vert \tau \right\vert }{(\rho +1)(\rho +2)}-\frac{4(\mu +1)^{2}}{(\rho
+1)^{2}}\right] }{\left\vert \dfrac{3\gamma (\mu +1)(\mu +2)}{(\rho +1)(\rho
+2)}-\dfrac{12(\mu +1)^{2}}{(\rho +1)^{2}}\right\vert \left\vert \tau
\right\vert +\dfrac{4(\mu +1)^{2}}{(\rho +1)^{2}}}\right\} .
\end{equation*}
\end{proof}

\begin{corollary}
Let $f\in \mathfrak{R}_{\Sigma ,\gamma }\left( \widetilde{\mathfrak{p}}%
\right) \ (\gamma \in 
\mathbb{C}
\backslash \{0\}).$Then%
\begin{equation*}
\left\vert a_{2}\right\vert \leq \min \left\{ \dfrac{\left\vert \gamma
\right\vert \left\vert \tau \right\vert }{\sqrt{3\left\vert \gamma
-4\right\vert \left\vert \tau \right\vert +4}},\left\vert \tau \right\vert 
\sqrt{\left\vert \gamma \right\vert }\right\}
\end{equation*}%
and%
\begin{equation*}
\left\vert a_{3}\right\vert \leq \min \left\{ \left\vert \gamma \right\vert
\left\vert \tau \right\vert ^{2},\dfrac{\left( \left\vert \gamma
-4\right\vert +\left\vert \gamma \right\vert \right) \left\vert \tau
\right\vert ^{2}\left\vert \gamma \right\vert }{3\left\vert \gamma
-4\right\vert \left\vert \tau \right\vert +4}\right\} .
\end{equation*}
\end{corollary}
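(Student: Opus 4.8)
The plan is to obtain this corollary as the $\mu=\rho=1$ specialization of the preceding theorem, since by the definition of $\mathfrak{R}_{\Sigma,\gamma}(\widetilde{\mathfrak{p}})$ this class is precisely $\mathfrak{R}_{\Sigma,\gamma}^{1,1}(\widetilde{\mathfrak{p}})$, so every bound proved there for a general $f\in\mathfrak{R}_{\Sigma,\gamma}^{\mu,\rho}(\widetilde{\mathfrak{p}})$ transfers once the parameters are fixed. First I would record how the Gamma-ratios collapse: for $\mu=\rho=1$ one has $(\mu+1)(\mu+2)=(\rho+1)(\rho+2)=6$ and $(\mu+1)^2=(\rho+1)^2=4$, hence $\tfrac{3\gamma(\mu+1)(\mu+2)}{(\rho+1)(\rho+2)}=3\gamma$, $\tfrac{12(\mu+1)^2}{(\rho+1)^2}=12$, $\tfrac{4(\mu+1)^2}{(\rho+1)^2}=4$, $\tfrac{3(\mu+1)(\mu+2)}{|\gamma|(\rho+1)(\rho+2)}=\tfrac{3}{|\gamma|}$, and $\tfrac{(\rho+1)(\rho+2)}{(\mu+1)(\mu+2)}=1$.

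Substituting these into the first bound of the theorem, the left entry of the minimum for $|a_2|$ becomes $\dfrac{|\gamma||\tau|}{\sqrt{|3\gamma-12||\tau|+4}}=\dfrac{|\gamma||\tau|}{\sqrt{3|\gamma-4||\tau|+4}}$ and the right entry becomes $|\tau|\sqrt{|\gamma|}$, which is the asserted estimate. For $|a_3|$ the left entry is immediately $|\gamma||\tau|^2$, while the right entry, after substitution, reads
\[
\frac{|\gamma||\tau|}{3}\left[1+\frac{3|\gamma||\tau|-4}{3|\gamma-4||\tau|+4}\right]=\frac{|\gamma||\tau|}{3}\cdot\frac{(3|\gamma-4||\tau|+4)+(3|\gamma||\tau|-4)}{3|\gamma-4||\tau|+4};
\]
the additive $\pm4$ cancel, a factor $3|\tau|$ comes out of the numerator, the two $3$'s cancel, and what remains is $\dfrac{|\gamma||\tau|^2\bigl(|\gamma-4|+|\gamma|\bigr)}{3|\gamma-4||\tau|+4}$, exactly as claimed. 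So the entire content of the proof is this bookkeeping of constants; there is no real obstacle.

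The only hypothesis of the parent theorem that must survive the specialization is the inequality $|\widetilde{\mathfrak{p}}_2|>|\widetilde{\mathfrak{p}}_1|$, used there to discard a nonnegative term; but this is independent of $\mu$ and $\rho$, since $(\ref{D})$ gives $|\widetilde{\mathfrak{p}}_1|=|\tau|\approx0.618$ and $|\widetilde{\mathfrak{p}}_2|=3\tau^2\approx1.146$. Hence the corollary follows at once. (Alternatively, one could rerun the argument from $(\ref{eq20})$--$(\ref{eq23})$ with $\mu=\rho=1$ throughout, but this merely reproduces the same computation.)
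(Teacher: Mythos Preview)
Your proposal is correct and is exactly the intended approach: the paper states this result as an immediate corollary of the preceding theorem by taking $\mu=\rho=1$ (recall that $\mathfrak{R}_{\Sigma,\gamma}(\widetilde{\mathfrak{p}})=\mathfrak{R}_{\Sigma,\gamma}^{1,1}(\widetilde{\mathfrak{p}})$), and your substitutions and simplifications are all accurate.
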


\begin{corollary}
Let $f\in \mathfrak{R}_{\Sigma }\left( \widetilde{\mathfrak{p}}\right) .$Then%
\begin{equation*}
\left\vert a_{2}\right\vert \leq \dfrac{\left\vert \tau \right\vert }{\sqrt{%
9\left\vert \tau \right\vert +4}}
\end{equation*}%
and%
\begin{equation*}
\left\vert a_{3}\right\vert \leq \frac{4\left\vert \tau \right\vert ^{2}}{%
9\left\vert \tau \right\vert +4}.
\end{equation*}
\end{corollary}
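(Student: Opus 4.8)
The plan is to obtain this corollary purely as a specialization: by the Definitions in Section~1, $\mathfrak{R}_{\Sigma}(\widetilde{\mathfrak{p}})$ is exactly the class $\mathfrak{R}_{\Sigma,\gamma}^{\mu,\rho}(\widetilde{\mathfrak{p}})$ with $\gamma=\mu=\rho=1$ (equivalently, $\mathfrak{R}_{\Sigma,\gamma}(\widetilde{\mathfrak{p}})$ with $\gamma=1$), so I would simply set $\gamma=\mu=\rho=1$ in the Theorem that bounds $\left\vert a_{2}\right\vert$ and $\left\vert a_{3}\right\vert$ and simplify. No new estimate is needed; the work is bookkeeping.

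For $\left\vert a_{2}\right\vert$, note that $\mu=\rho=1$ gives $(\mu+1)(\mu+2)=(\rho+1)(\rho+2)=6$ and $(\mu+1)^{2}=(\rho+1)^{2}=4$, so the quantity under the square root collapses via
\[
\frac{3\gamma(\mu+1)(\mu+2)}{(\rho+1)(\rho+2)}-\frac{12(\mu+1)^{2}}{(\rho+1)^{2}}=3-12=-9,\qquad \frac{4(\mu+1)^{2}}{(\rho+1)^{2}}=4,
\]
turning the first argument of the minimum into $\dfrac{\left\vert\tau\right\vert}{\sqrt{9\left\vert\tau\right\vert+4}}$ and the second into $\left\vert\tau\right\vert$. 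Since $9\left\vert\tau\right\vert+4>1$ for $\tau=\tfrac{1-\sqrt5}{2}$, the first is the smaller one, which is the asserted bound. For $\left\vert a_{3}\right\vert$, the same substitutions turn the prefactor into $\tfrac{\left\vert\tau\right\vert}{3}$ and the bracketed correction term into $1+\dfrac{3\left\vert\tau\right\vert-4}{9\left\vert\tau\right\vert+4}=\dfrac{12\left\vert\tau\right\vert}{9\left\vert\tau\right\vert+4}$, so the second argument of the minimum becomes
\[
\frac{\left\vert\tau\right\vert}{3}\cdot\frac{12\left\vert\tau\right\vert}{9\left\vert\tau\right\vert+4}=\frac{4\left\vert\tau\right\vert^{2}}{9\left\vert\tau\right\vert+4},
\]
while the first argument becomes $\tau^{2}=\left\vert\tau\right\vert^{2}$; because $9\left\vert\tau\right\vert+4>4$, the former is the smaller, giving the claim.

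There is essentially no obstacle here: the only care needed is keeping track of the absolute values inside the $\left\vert a_{3}\right\vert$-bracket and checking the two numerical comparisons $9\left\vert\tau\right\vert+4>1$ and $9\left\vert\tau\right\vert+4>4$ that select the active branch of each minimum, both immediate from $\left\vert\tau\right\vert=\tfrac{\sqrt5-1}{2}\approx0.618$. In fact one does not even have to resolve the minima for the \emph{inequalities} as stated, since each displayed bound is already one of the arguments of the corresponding $\min$ after specialization; I would nevertheless record the branch identification so that the stated estimates are seen to be the sharp consequences of the Theorem.
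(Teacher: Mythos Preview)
Your proposal is correct and is exactly the approach the paper implicitly uses: the corollary is stated without proof as the specialization $\gamma=\mu=\rho=1$ of the preceding theorem (equivalently, $\gamma=1$ in the intermediate corollary for $\mathfrak{R}_{\Sigma,\gamma}(\widetilde{\mathfrak{p}})$), and your arithmetic and branch selections for the two minima are accurate.
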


\end{document}